\newcommand{\ud}{\mathrm{d}}
\newtheorem{theorem}{Theorem}[section]
\newtheorem{proposition}[theorem]{Proposition}
\newtheorem{definition}[theorem]{Definition}
\newtheorem{remark}[theorem]{Remark}
\newcommand{\Cu}[1]{\mathbf{C^{#1}}}
\newcommand{\BC}{\mathbf{BC}}	
\newcommand{\Cl}[1]{\mathbf{C_{#1}}}			
\newcommand{\BL}{\mathbf{BL}}	 			
\newcommand{\TV}{\mathbf{TV}}	
\newcommand{\Lip}{\mathbf{Lip}} 				
\newcommand{\reali}{\mathbb{R}}
\newcommand{\naturali}{\mathbb{N}}			
\def\sqr#1#2{{\vcenter{\vbox{\hrule height.#2pt\hbox{\vrule width.#2pt height#1pt \kern#1pt\vrule width.#2pt}\hrule height.#2pt}}}}
\renewcommand{\d}[1]{\mathrm{d}{#1}}
\newcommand{\norma}[1]{{\left \|#1\right \|}} 		
\renewcommand{\phi}{\varphi}
\newtheorem{assumptions}[theorem]{Assumptions}
\newcommand{\R}{\mathbb{R}}
\newcommand{\M}{\mathcal M} 
\begin{document}

\title{Asymptotic behaviour of a structured population model on a measure space}
\author[J. Z. Farkas]{J\'{o}zsef Z. Farkas$^1$}
\author[P. Gwiazda]{Piotr Gwiazda$^2$}
\author[A. Marciniak-Czochra]{Anna Marciniak-Czochra$^3$}
\address[$^1$]{Division of Computing Science and Mathematics, University of Stirling, Stirling, FK9 4LA, UK}
\email{jozsef.farkas@stir.ac.uk}
\address[$^2$]{Institute of Applied Mathematics and Mechanics,  University of Warsaw, Warszawa 02-097, Poland}
\email{pgwiazda@mimuw.edu.pl}
\address[$^3$]{Institute of Applied Mathematics,  Interdisciplinary Center for Scientific Computing (IWR) and BIOQUANT, University of Heidelberg, 69120 Heidelberg, Germany}
\email{anna.marciniak@iwr.uni-heidelberg.de}

\date{}

\begin{abstract}
In this paper we consider a physiologically structured population model with distributed states at birth, formulated on the space of non-negative Radon measures. Using a characterisation of the pre-dual space of bounded Lipschitz functions, we show how to apply the theory of strongly continuous positive semigroups to such a model. In particular, we establish the exponential convergence of solutions to a one-dimensional global attractor. \\

\noindent {\bf Keywords:} Physiologically structured populations, non-negative Radon measures, bounded Lipschitz distance, spectral theory of positive semigroups.\\

\end{abstract}

\maketitle

\section{Introduction}

Mathematical models describing the time evolution of physiologically structured populations have been extensively studied for many years, see e.g. \cite{MetzDiekmann,Webb}. Many of the models take the form of evolutionary partial differential equations describing the density of individuals with respect to a specific structural variable representing, for example,  age, size, phenotypic trait or cell maturity, see e.g. \cite{AcklehFarkas, CalsinaCuadrado, CalsinaCuadrado2, Desvillettes, Perthame}. Classical results for such models were obtained typically in the space of Lebesgue integrable functions (densities), see for example the early monograph \cite{Webb}. The choice of the state space $L^1$ is biologically motivated, as convergence with respect to the natural norm implies convergence of the total population size. At the same time, the spectral theory of positive operators on abstract Lebesgue (AL) spaces, which provides a convenient framework to analyse structured population models, is well-developed, see e.g. \cite{Arendt,Clement,EngelNagel,Sch}.

The asymptotic behaviour of solutions of age-structured models was already stu\-died using a direct Laplace transform technique, utilising the corresponding Volterra integral equation, see e.g. \cite{Feller, Feller2}. This approach was further developed later by a number of researchers, see e.g. \cite{I}. However, with the blossoming of semigroup theory in the middle of the $20$th century, researchers started to develop a general semigroup framework for treating structured population models, see e.g. \cite{EngelNagel,Webb}, and further references therein. For example, a number of authors have used semigroup theory to show that the long term behaviour of solutions of wide classes of models can be characterised via a reduction to a one dimensional evolution problem. In particular, compactness and positivity properties of the governing semigroups were proved to be useful to investigate the asymptotic behaviour of solutions, see e.g. \cite{WebbGrabosch,Webb1987,Webb}. Later, some of the results were generalised to other classes of physiologically structured population models, most notably size-structured models,  see e.g. \cite{MetzDiekmann,Thieme1998DCDS,Thieme1998JMAA}, and attempts were made to establish similar results for nonlinear models too, see e.g. \cite{GyllenbergWebb}. 

An alternative way of studying the long term dynamics of structured models have been proposed by Perthame and colleagues, see e.g. \cite{MichelMischlerPerthame,MichelMischlerPerthame2,MischlerPerthameRyzhik,Perthame}. This approach is based on multiplying the governing equations by a nonlinear function of the solution, the so called  entropy function, which in turn leads to a family of nonlinear renormalisations (relative entropies). This method can be directly applied only in the $L^1$ setting, and an extension to measure spaces requires a concept of a composition of a nonlinear function with bounded measures. Such an approach has been recently undertaken in \cite{GwiazdaWiedemann}.  At the same time, the method of generalised relative entropy does not require to establish a spectral gap condition (often utilised in the semigroup framework), but this in turn leads to the lack of the exponential convergence of solutions; instead, only algebraic convergence rate can be shown. 

More recently, there have been a number of results published showing that particular classes of quasilinear equations exhibit blow-up phenomena. That is, due to the nonlinear transport term, solutions with $L^1$ initial data tend to concentrate in finite time, see for example \cite{Ackleh1}. Therefore, researchers have started to study structured population models in spaces of non-negative Radon measures, see e.g. \cite{BGMC,CCGU,EversHilleMuntean,GLMC, GMC,Piccoli,two_sex}. 
This more general approach is particularly useful for mo\-dels, which are shown to exhibit a lack of local/global existence on Lebesgue spaces. 
At the same time, the idea of representing a heterogeneous population as a sum of masses concentrated in different points of the individual state space, can be also motivated from the biological point of view. For example, in some concrete applications, population data might be only obtained in measurements taken at discrete phy\-si\-olo\-gi\-cal states (cohorts); which data then naturally gives rise to define initial conditions in measure spaces. In particular, the possible choice of spaces of non-negative Radon measures was already proposed in \cite{MetzDiekmann}, as potentially relevant for biological applications, when the initial distribution of individuals is concentrated with respect to the structuring variable, i.e. it is not absolutely continuous with respect to the Lebesgue measure. 

A framework for the analysis of solutions of structured population models using Wasserstein-type metrics, adjusted to the non-conservative character of the considered problem, has been proposed in \cite{GLMC}, using a flat metric (bounded Lipschitz distance); and in \cite{GMC} using a Wasserstein-type metric, adjusted to spaces of non-negative Radon measures with integrable first moments.  The advantage of the proposed approach is in providing the structure of the space appropriate to compare solutions and to study their stability. Among others, continuous dependence with respect to the model ingredients is important in the context of numerical approximations and model calibration based on experimental data. However, so far, only results on existence, uniqueness and Lipschitz continuous dependence of solutions on the model ingredients have been obtained. This has allowed researchers to establish the stability of numerical schemes based on a particle method, for example the EBT (escalator boxcar train) algorithm, see e.g. \cite{EBT,CGU,GJMCU}.
Asymptotic analysis of a structured model with single state at birth, incroporating identical fertility and mortality rates; with initial data in a space of Radon measures equipped with the total variation norm has been undertaken recently in \cite{Gabriel}.
 
Models containing only integro-differential terms have been investigated on measure spaces using an approach of strongly continuous semigroups, see e.g. \cite{BuergerBomze}. The long-term behaviour of such structured populations models is also a topic of a recent paper by Mischler and Scher \cite{MischlerScher}, providing a spectral analysis of $C_0$-semigroups. Extending this approach to more general structured population models, i.e. to those including also a transport term on spaces of non-negative Radon measures leads to a difficulty, which is related to the lack of strong continuity of the semigroup generated by a transport equation on the space of measures with respect to the total variation norm.

In this paper we extend the semigroup approach applied previously on Lebesgue spaces, see e.g. \cite{FarkasHagen,FarkasHinow}, to a problem with initial data in a space of non-negative Radon measures. Results concerning Lipschitz dependence on time, initial data and model parameters of the solutions in the space of measures were based on the analysis of semi-flows in metric spaces given by a positive cone of Radon measures, equipped with a Lipschitz bounded distance, or other Wasserstein-type metrics. 

When approaching the problem with the spectral theory of positive  semigroups, one needs a Banach lattice. Extending the positive cone to a whole space of Radon measures with a Lipschitz bounded norm provides a linear structure of the functional space, which needs to be closed to obtain a Banach space. The Lipschitz bounded norm has been equivalently defined in the literature as flat norm \cite{Neunzert}, Kantorovich-Rubinstein norm \cite{Bogachev}, Fortet-Mourier norm \cite{Lasota} or Dudley norm \cite{EversHilleMuntean}, see \cite{GMT} for more details.  

To establish irreducibility of a semigroup, it may be crucial to know a cha\-rac\-te\-risation of the pre-dual space to the space of bounded Lipschitz functions. In the abstract theory of functional analysis and approximation theory a so-called free Lipschitz space is considered. This space is pre-dual to the space of Lipschitz continuous functions with $0$ at origin, denoted by $Lip_0$, see e.g. \cite{Weaver}. Similar constructions of the pre-dual space to the space of bounded Lipschitz functions, which are not necessarily zero at a fixed point, have been proposed by Hille and Worm in \cite{HilleWorm}. A remarkable observation is that the transport semigroup is strongly continuous on this pre-dual space and that this space is a closure of the space of Radon measures with respect to the bounded Lipschitz distance. This allows the extension of some of the results on strongly continuous quasi-compact, positive semigroups from the $L^1$ setting to more general state spaces. Thus, we will prove that asymptotically our model essentially reduces to a one dimensional evolution problem, under some assumptions on the model ingredients. Interestingly, the $C_0$ property of the transport semigroup does not hold in the dual space, as shown in \cite[Lemma 2]{GOU}. 

We also note that our results can be extended to a more general setting, in particular to a space of real Borel measures on a metric space, see \cite{GMT}. Such framework can then be applied to analyse structured population models on networks.

\section{Spaces and norms}
Here we briefly introduce the notations for the spaces and norms we are going to use throughout the paper. In particular we  set our state space as
\begin{equation*}
\mathcal{X}=\overline{\mathcal{M}(\mathbb{R}_+)}^{||\cdot ||^*_{\BL}};
\end{equation*}
that is, $\mathcal{X}$ is the closure of the set of Radon measures on $\mathbb{R}_+$, with respect to the norm
\begin{equation*}
||\mu||_{\BL}^*:=\sup\left\{\left|\int\phi\,\ud \mu\right|\,:\,||\phi||_{\BL}\le 1\right\}.
\end{equation*}
Here $||\cdot||_{\BL}^*$ is a dual norm to the norm on the space $\BL(\mathbb{R}_+)$ of bounded Lipschitz functions on 
$\mathbb{R}_+$, given by
\begin{equation}
\|\phi\|_{\BL}:=\left|\left| \,|\phi | + \left|\partial_x \phi\right|\, \right|\right|_{\infty}.\label{ourBL}
\end{equation}

Note that $\left(\BL(\mathbb{R}_+),||\cdot||_{\BL}\right)$ is a Banach lattice, with positive cone \begin{equation*}
\BL_+(\mathbb{R}_+):=\left\{f\in \BL(\mathbb{R}_+)\,|\,f\ge 0\right\},
\end{equation*} 
which defines an ordering, i.e. $f\ge g$ if and only if $(f-g)\in\BL_+(\mathbb{R}_+)$.  
Furthermore, we have
\begin{equation*}
\left(\overline{\mathcal{M}(\mathbb{R}_+)}^{||\cdot ||^*_{\BL}}\right)^* = \BL(\mathbb{R}_+),
\end{equation*} 
see Theorem 3.7 in  \cite{HilleWorm}, and in particular, in our setting we have
\begin{equation*}
\left(\overline{\mathcal{M}(\mathbb{R}_+)}^{||\cdot ||^*_{\BL}}\right)_+={\mathcal M_+(\mathbb{R}_+)}=\mathcal{X}_+,
\end{equation*} 
see Theorem 3.9 in  \cite{HilleWorm}.

The latter two properties hold, since the  norm chosen above in $\eqref{ourBL}$ is equivalent to the norm used in \cite{HilleWorm}, i.e. it is equivalent to the norm
\begin{equation*}
\|\phi\|_{\BL2}:= ||\phi\|_{\infty} + |\phi|_{\Lip},
\end{equation*}
where 
\begin{equation*}
|\phi|_{\Lip}:=\displaystyle\sup_{x,y \in \mathbb{R}_+}\left\{ \frac{\big|\phi(x)-\phi(y)\big|}{d(x,y)}, x \not= y\right\},
\end{equation*}
and  
\begin{equation*}
\|\phi\|_{\infty}:=\displaystyle\sup_{x\in\reali_+}\left|\phi(x)\right|.
\end{equation*} 
Our choice of the norm $\eqref{ourBL}$ is important for the analysis presented in Section 4.

\section{Problem formulation and existence of solutions}

We consider a linear structured population model involving a transport operator describing the development of individuals with respect to a physiological structuring variable (determining individual state), an integral operator describing the birth/recruitment process, and a li\-ne\-ar decay term accounting for individual mor\-ta\-li\-ty. Specifically, we consider the following model.
\begin{equation}
\partial_t \mu+\partial_x \left(b(x) \, \mu \right)+c(x)\,\mu = \displaystyle\int_{\R_+} \eta(y)\, \ud\mu(y),\quad (t,x) \in  \R_+ \times \R_+,\quad  \mu_0 \in  \mathcal{X}_+. \label{Model}
\end{equation}
Linear and nonlinear structured population models with distributed recruitment processes, but formulated on Lebesgue spaces were introduced and studied recently for example in \cite{AcklehFarkas,CDF,FarkasHinow}. However, it is important to note that   in contrast to \cite{AcklehFarkas,CDF,FarkasHinow} here we do not impose a finite maximal value for the structuring variable $x$. For some applications this might be a more natural assumption; but at the same time this poses additional challenges in the spectral analysis of model \eqref{Model}. This was already observed in the case of different classes of models formulated on Lebesgue spaces, see e.g. \cite{FarkasHagen}.


We impose the following assumptions on the model parameters.
\begin{assumptions}\label{Assum} 
\ \begin{itemize}
\item[(i)] $c \in  \BL(\R_+)$.
\item[(ii)] $y\mapsto \eta(y) \in  \BL\left(\R_+; (\mathcal{X}_+, \norma{\cdot}_{\BL}^*)\right)$.
\item[(iii)] $b \in  \BL(\R_+)$, $b > 0$.
\end{itemize}
\end{assumptions}
Note that these assumptions are required to study the existence and uniqueness of solutions of model \eqref{Model}. Later on,  when studying the asymptotic behaviour of solutions, we will impose further conditions on the model ingredients.

We note that a norm in the space $\BL\left(\R_+; (\mathcal{X}_+, \norma{\cdot}_{\BL}^*)\right)$ is defined as
\begin{displaymath}
\norma{\eta}_{\BL} = \norma{\eta}_{\BC} + \Lip(\eta) \, ,
\quad \mbox{where } \quad
\norma{\eta}_{\BC} =
\sup_{x\in\reali_+}\norma{\eta(x)}_{\BL}^*, \,
\end{displaymath}
and $\Lip(\eta)$ is the usual Lipschitz constant of $\eta$.

We begin with the definition of a solution of model \eqref{Model} on a finite time interval $[0,T]$, and for values in a positive cone of Radon measures.

\begin{definition}
 \label{def:WeakSolution}
Given $T>0$, a function $\mu \colon [0,T] \to\mathcal{X}_+$ is a \emph{weak solution}
of model \eqref{Model} on the time interval $[0,T]$, if $\mu$ is narrowly continuous with respect to time, and for all $\phi \in (\Cu{1}
 \cap \BL) \left(\reali_+\times \reali_+\right)$ the following equality holds:
 \begin{eqnarray}
  && \int_0^T \int_{\reali_+} \left(
   \partial_t \phi(t,x) + b(x) \; \partial_x
   \phi(t,x) - c(x) \; \phi(t,x) \right)\,\ud\mu_t(x)\,\ud{t} \nonumber
  \\
  \label{Formulation:WeakSolution}
  && \quad
  + \int_0^T \int_{\reali_+} \left( \int_{\reali_+} \phi(t,y)
   \d{\left[\eta(x)\right]} (y) \right)\, \ud\mu_t(x)\,\ud{t}
  \\
  && = \int_{\reali_+} \phi(T,x) \; \d{\mu_T}(x) - \int_{\reali_+}
  \phi(0,x) \; \d{\mu_0}(x).
  \nonumber
 \end{eqnarray}
\end{definition}
\noindent
For the notion of \emph{narrow continuity} we refer to \cite[\S~5.1]{Ambrosio}, where this concept was introduced. 
\begin{definition}\label{narrow_conv}
We say that a sequence $\{\mu^n\}_{n\in\naturali} \subset \mathcal M(\reali_+)$ converges narrowly to a measure $\mu \in \mathcal M(\reali_+)$, if and only if 
\begin{equation*}
\lim_{n \to +\infty} \int_{\reali_+} \phi(x) \,\ud\left(\mu^n - \mu\right) (x) = 0, \quad \forall\phi \in \Cl{b}(\reali_+),
\end{equation*}
where $\Cl{b}$ denotes the space of bounded continuous functions.
Similarly, we say that a mapping $\mu : [0,T] \mapsto {\mathcal M}(\reali_+)$ is narrowly continuous, if for every $\phi \in \Cl{b}(\reali_+)$ the function
$$
f:[0,T]\mapsto \reali,\quad f(t) = \int_{\reali_+} \phi(x) \,\ud \mu_t(x)
$$
is continuous.
\end{definition}

Above, in Definition \ref{def:WeakSolution}, the integral $\int_{\reali_+} \phi(t,y) \,\ud{\left[\eta(x)\right]}(y)$ denotes the integral of $\phi(t,y)$ with respect to the measure $\eta(x)$ in the variable $y$. Similarly, $\int_{\reali_+}\phi(T,x) \,\ud{\mu_T}(x)$ is the integral of $\phi(T,x)$ with respect to the measure $\mu_T$ in the variable $x$.

Local-in-time existence of solutions and their Lipschitz dependence  on time, initial data and model parameters follow from Theorem 2.10 in \cite{CCGU},  formulated therein for the non-autonomous case.  For the reader's convenience, we recall this result adjusted to our problem and replacing the flat metric by the bounded Lipschitz norm. 

\begin{proposition} \label{existenceAgnieszka}
Let  Assumptions \ref{Assum} (i)-(iii) hold true. Then model \eqref{Model} is governed by a strongly continuous semigroup  $\left\{\mathcal{T}(t)\right\}_{t\geq0}$ on a finite time interval $[0,T]$, that admits the following properties.
 \begin{enumerate}
 \item $\mathcal{T}(0) = \mathrm{\mathbf{Id}}$, and for all $t_1, t_2 \in [0,T]$ with $t_1+t_2 \in
  [0,T]$, we have $\mathcal{T}(t_1) \circ \mathcal{T}(t_2) = \mathcal{T}(t_1+t_2)$.
 \item For all $t \in [0,T]$ and for all $\mu_1, \mu_2 \in
  \mathcal{X}_+$, the following estimate holds:
  \begin{displaymath}
    \norma{\mathcal{T}(t)\,\mu_1-\mathcal{T}(t)\,\mu_2}^*_{{\BL}}
   \leq
C_1(t)
  \left|\left|\mu_1-\mu_2\right|\right|^*_{\BL},
  \end{displaymath}
  where 
  $$C_1(t)=   \exp
   \big[
    3t
    \left(
     \norma{\partial_x b}_{\infty}
     +
     \norma{c}_{\BL}
     +
     \norma{\eta}_{\BL}
    \right)\big].$$
 \item For all $t \in [0,T]$ and for all $\mu_0 \in
  \mathcal{X}_+$, define $\mu_t = \mathcal{T}(t)\,\mu_0$. Then, the
  solution $\mu$ of problem \eqref{Model} is Lipschitz continuous with respect to time and the following estimate
  holds:
  \begin{displaymath}
   \norma{\mathcal{T}(t)\,\mu_0 - \mu_0}^*_{{\BL}}
   \leq C_2(t)
  \norma{\mu_0}_{\TV},
  \end{displaymath}
  where  $  \norma{\cdot}_{\TV}$ denotes the total variation norm, and
  $$ C_2(t) =  
    \norma{b}_\infty
    +
    \left(
     \norma{c}_{\infty}
     +
     \norma{\eta}_{\BC}
    \right)
    \exp{ \left[
      \left(
       \norma{c}_{\infty} + \norma{\eta}_{\BC}
      \right)
      t \right]}.$$
 \item For all $\mu_0 \in \mathcal{X}_+$, the orbit $t \to
  \mathcal{T}(t)\,\mu_0$ of the semigroup is a weak solution of the linear autonomous
  problem~\eqref{Model} in the sense of
  Definition~{\rm\ref{def:WeakSolution}}.
 \end{enumerate}
\end{proposition}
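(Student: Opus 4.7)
The plan is to apply Theorem~2.10 of \cite{CCGU} essentially directly, since model \eqref{Model} is the autonomous restriction of the non-autonomous setting treated there, and to verify that the estimates carry over from the flat metric to the bounded Lipschitz norm used here. I would begin by constructing the semigroup via an operator splitting / Duhamel approach. Let $X(t;x_0)$ denote the characteristic flow solving $\dot X = b(X)$, $X(0)=x_0$, which is well defined on $\R_+$ since $b \in \BL(\R_+)$ with $b>0$. Define the push-forward semigroup with decay
\begin{equation*}
\left(S(t)\mu\right)(A) = \int_{\R_+}\mathbf{1}_A\big(X(t;x)\big)\,\exp\!\left[-\int_0^t c\big(X(s;x)\big)\,\ud s\right]\,\ud\mu(x),
\end{equation*}
which handles the transport and mortality terms. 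Then write the full problem in mild form
\begin{equation*}
\mu_t = S(t)\mu_0 + \int_0^t S(t-s)\,\mathcal{R}[\mu_s]\,\ud s,
\qquad \mathcal{R}[\nu]=\int_{\R_+}\eta(y)\,\ud\nu(y),
\end{equation*}
and solve by Banach fixed point on $C([0,T];\mathcal{X}_+)$ equipped with the sup-in-time $\|\cdot\|_{\BL}^*$ metric. Assumption~\ref{Assum} guarantees that $\mathcal{R}$ maps $\mathcal{X}_+$ into itself and is Lipschitz with constant $\|\eta\|_{\BL}$; the operator $S(t)$ is a positive contraction-type semigroup on $(\mathcal{X}_+,\|\cdot\|_{\BL}^*)$ whose Lipschitz constant is governed by $\|\partial_x b\|_\infty + \|c\|_{\BL}$ through the duality pairing with $\BL$ test functions.

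For items (1) and (4), the semigroup property and the weak formulation follow once local existence and uniqueness are established: given $\phi \in (\Cu{1}\cap\BL)(\R_+\times\R_+)$, one tests the mild formulation against $\phi$, differentiates along characteristics, and integrates by parts in time to recover \eqref{Formulation:WeakSolution}. Uniqueness yields $\mathcal{T}(t_1)\mathcal{T}(t_2)=\mathcal{T}(t_1+t_2)$ by comparing two weak solutions with common initial datum.

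For the Lipschitz estimate in item (2), I would test $\mathcal{T}(t)\mu_1-\mathcal{T}(t)\mu_2$ against $\phi\in\BL(\R_+)$ with $\|\phi\|_{\BL}\leq 1$, use the mild formulation to get
\begin{equation*}
\left|\int\phi\,\ud(\mu_{1,t}-\mu_{2,t})\right| \leq \|\mu_1-\mu_2\|_{\BL}^* + \int_0^t \big(\|\partial_x b\|_\infty + \|c\|_{\BL} + \|\eta\|_{\BL}\big)\,\|\mu_{1,s}-\mu_{2,s}\|_{\BL}^*\,\ud s,
\end{equation*}
after estimating how $\phi\circ X(\cdot,\cdot)$ propagates in $\BL$ norm (here the factor $3$ in $C_1(t)$ arises from collecting the three contributions), and conclude by Gronwall. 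For item (3), I would apply the weak formulation with a time-independent test function $\phi\in\BL(\R_+)$ and bound the right-hand side of \eqref{Formulation:WeakSolution} using $|\phi(X(t;x))-\phi(x)|\leq t\,\|b\|_\infty\,\|\phi\|_{\BL}$ for the transport contribution and $\|\phi\|_\infty(\|c\|_\infty + \|\eta\|_{\BC})$ for the reaction contributions, then apply Gronwall in total variation.

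The main obstacle is the change of norm: Theorem~2.10 of \cite{CCGU} is stated for the flat metric, while our bounded Lipschitz norm \eqref{ourBL} is equivalent but not equal to the norm $\|\cdot\|_{\BL 2}$ used implicitly in \cite{HilleWorm,CCGU}. One must verify that the duality pairing $\langle \phi, \mu\rangle$ behaves correctly under the flow, and in particular that composition with the characteristic map preserves membership in the unit ball of $\BL$ up to the exponential factors appearing in $C_1(t)$. Once this equivalence of norms is tracked through the estimates of \cite{CCGU}, properties (1)--(4) transfer verbatim to the present setting, and the semigroup extends by density from $\mathcal{M}_+(\R_+)$ to all of $\mathcal{X}_+$ thanks to the Lipschitz bound (2).
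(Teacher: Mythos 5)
Your proposal matches the paper's treatment: Proposition \ref{existenceAgnieszka} is not proved from scratch there but is recalled as a direct consequence of Theorem 2.10 in \cite{CCGU} (the autonomous special case), with the flat metric replaced by the equivalent bounded Lipschitz norm \eqref{ourBL} --- exactly the route you take, and your sketch of characteristics, the mild/Duhamel formulation, the fixed point argument and the Gronwall estimates is a faithful reconstruction of the machinery inside that cited theorem. The one point worth tracking carefully, which you already flag, is the transfer of the constants $C_1(t)$ and $C_2(t)$ under the equivalence of norms; otherwise the argument is the same.
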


Above, in Proposition \ref{existenceAgnieszka} assertion (2) corresponds to the Lipschitz dependence of a model solution on the
initial data, while assertion (3) characterises its time regularity. In the next section we show that the semigroup $\mathcal{T}(t)$ can be extended to the whole time interval $[0,\infty)$ due to the arbitrary choice of $T<\infty$.

\section{Asymptotic behaviour}

In this section we are going to characterise the asymptotic behaviour of solutions of model \eqref{Model}. 
In particular, using results from the theory of strongly continuous po\-si\-tive semigroups on Banach lattices, we show that solutions of model \eqref{Model} approach a finite dimensional attractor. One of the main difficulties we need to 
overcome is that model \eqref{Model} is not governed by an eventually compact semigroup; and the lack of eventual compactness in turn poses challenges in the spectral analysis of the semigroup. We refer the interested reader to \cite{FarkasHagen}, where a  (hierarhic) structured population model, where individuals exhibit cannibalistic behaviour, with an unbounded individual state space (i.e. no finite maximal size)  was investigated. In contrast to the model studied in \cite{FarkasHagen} an added inherent difficulty of model \eqref{Model} is that for structured population models with distributed recruitment processes it is not possible to characterise the point spectrum of the generator of the semigroup via roots of an associated characteristic equation, in general; see for example  \cite{FarkasHinow} for more details. 

For some standard definitions and notations from the spectral theory of strongly continuous semigroups not explicitly introduced here we refer the reader to  \cite{EngelNagel}.

First we rewrite model \eqref{Model} as an abstract Cauchy problem on the state space $\mathcal{X}=\overline{\mathcal{M}(\mathbb{R}_+)}^{||\cdot ||^*_{\BL}}$ as follows:

\begin{equation}\label{Cauchy}
\frac{\ud \mu}{\ud t}=\left(\mathcal{A}+\mathcal{B}+\mathcal{C}\right)\mu, \quad \mu(0)=\mu_0,
\end{equation}
where we define
\begin{align}
\mathcal{A}\,\mu&=-\frac{\partial}{\partial x}\left(b\,\mu\right),  \hspace{15mm} D(\mathcal{A})=L^1(\mathbb{R}_+)\cap \mathcal{X}, \label{Adef} \\
\mathcal{B}\,\mu&=-c\,\mu, \hspace{25mm}  D(\mathcal{B})=\mathcal{X}, \\
\mathcal{C}\,\mu&=\int_{\mathbb{R}_+}(\eta(y))\,\ud \mu(y), \hspace{5mm}   D(\mathcal{C})=\mathcal{X}.\label{Cdef}
\end{align}
Note that $\mathcal{A}$ is a densely defined closed operator, and the assumptions we imposed on $b$ (see Assumptions \ref{Assum}) imply that it generates a strongly continuous semigroup of positive operators on $\mathcal{X}$, denoted by $\mathcal{T}_{\mathcal{A}}(t)$. 
Furthermore, our assumption on $\eta$ (see Assumptions \ref{Assum} (ii)) implies that $\mathcal{C}$ is a positive operator, i.e. it maps $\mathcal{X}_+$ into $\mathcal{X}_+$.

\begin{proposition}\label{stronglycontinuous}
Under Assumptions \ref{Assum}(i)-(iii), there exists a unique weak solution $\mu \colon [0,T] \to
\mathcal{X}$ of model \eqref{Model}, which coincides with a trajectory of a strongly continuous semigroup $\mathcal{T}(t)$ on the Banach space $\overline{\mathcal{M}(\mathbb{R}_+)}^{||\cdot ||^*_{\BL}}$, defined for all $t\in \mathbb{R}_+$.
\end{proposition}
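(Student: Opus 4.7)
The plan is to bootstrap the finite-horizon, cone-valued semigroup from Proposition \ref{existenceAgnieszka} into a global, strongly continuous, linear semigroup on the full Banach space $\mathcal{X}$. Since Proposition \ref{existenceAgnieszka} holds for arbitrary $T>0$, the semigroup property $\mathcal{T}(t_1)\circ\mathcal{T}(t_2)=\mathcal{T}(t_1+t_2)$ together with the uniqueness implicit in the Lipschitz estimate (2) allows me to glue the local semigroups on overlapping intervals into a well-defined family $\{\mathcal{T}(t)\}_{t\ge 0}$ on $\mathcal{X}_+$. Both the bound $C_1(t)$ and strong continuity persist on every compact subinterval of $\mathbb{R}_+$.

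Next I would promote this cone semigroup to a linear semigroup on $\mathcal{X}$. Because the Cauchy problem \eqref{Cauchy} and the operators $\mathcal{A},\mathcal{B},\mathcal{C}$ in \eqref{Adef}--\eqref{Cdef} are linear, whenever $\mu_1,\mu_2\in\mathcal{X}_+$ the sum $\mathcal{T}(t)\mu_1+\mathcal{T}(t)\mu_2$ is a narrowly continuous, $\mathcal{X}_+$-valued curve that satisfies the weak formulation \eqref{Formulation:WeakSolution} with initial datum $\mu_1+\mu_2\in\mathcal{X}_+$; uniqueness of weak solutions on the cone then forces $\mathcal{T}(t)(\mu_1+\mu_2)=\mathcal{T}(t)\mu_1+\mathcal{T}(t)\mu_2$, and the analogous identity $\mathcal{T}(t)(\alpha\mu_1)=\alpha\mathcal{T}(t)\mu_1$ for $\alpha\ge 0$ follows similarly. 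With this positive additivity, I would use the Jordan decomposition $\mu=\mu^+-\mu^-$ of any signed $\mu\in\mathcal{M}(\mathbb{R}_+)$ to define
\[
\mathcal{T}(t)\mu := \mathcal{T}(t)\mu^+ - \mathcal{T}(t)\mu^-,
\]
which is well defined because any other decomposition $\mu=\nu_1-\nu_2$ with $\nu_i\in\mathcal{X}_+$ gives $\mu^++\nu_2=\mu^-+\nu_1$ in $\mathcal{X}_+$ and additivity on the cone reconciles the two representations. Applying estimate (2) of Proposition \ref{existenceAgnieszka} with the second argument set to $0\in\mathcal{X}_+$ gives $\|\mathcal{T}(t)\mu\|_{\BL}^*\le 2C_1(t)\|\mu\|_{\BL}^*$ for $\mu\in\mathcal{M}(\mathbb{R}_+)$, so by density of $\mathcal{M}(\mathbb{R}_+)$ in $\mathcal{X}=\overline{\mathcal{M}(\mathbb{R}_+)}^{\|\cdot\|_{\BL}^*}$ the operator extends uniquely to a bounded linear $\mathcal{T}(t)\in\mathcal{L}(\mathcal{X})$, which inherits the semigroup law by continuity.

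For strong continuity on $\mathcal{X}$ I would combine the uniform operator bound $\sup_{t\in[0,T']}\|\mathcal{T}(t)\|_{\mathcal{L}(\mathcal{X})}<\infty$ on every compact interval with strong continuity on the dense subspace $\mathcal{M}(\mathbb{R}_+)$ inherited from Proposition \ref{existenceAgnieszka}, using the standard three-$\varepsilon$ argument: approximate $\mu\in\mathcal{X}$ by $\mu_n\in\mathcal{M}(\mathbb{R}_+)$, estimate $\|\mathcal{T}(t)\mu-\mu\|_{\BL}^*$ by inserting $\pm\mathcal{T}(t)\mu_n\pm\mu_n$, and let first $t\to 0^+$ and then $n\to\infty$. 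Uniqueness of the weak solution in $\mathcal{X}$ follows from the linearity of the problem, reducing to uniqueness on the cone via the Jordan decomposition. The main obstacle I foresee is the second step: one must verify carefully that the sum of two $\mathcal{X}_+$-valued narrowly continuous weak solutions is again a weak solution in the sense of Definition \ref{def:WeakSolution}, and that uniqueness can be promoted from the cone to signed measures so that the Jordan-based definition is intrinsic. The linearity of \eqref{Formulation:WeakSolution} in the measure argument makes this plausible, but it is where the passage from a metric semi-flow on $\mathcal{X}_+$ to a bona fide $C_0$-semigroup on the Banach lattice $\mathcal{X}$ actually takes place.
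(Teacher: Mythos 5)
Your proposal follows the paper's own route essentially step for step: gluing the finite-horizon semigroups via uniqueness of weak solutions, extending from the cone $\mathcal{X}_+$ to $\mathcal{M}(\mathbb{R}_+)$ by the Hahn--Jordan decomposition and linearity, passing to the closure $\mathcal{X}$ by the Lipschitz bound, and proving strong continuity at $t=0$ by the same three-$\varepsilon$ approximation with estimates (2) and (3) of Proposition \ref{existenceAgnieszka}. You are in fact more explicit than the paper about why additivity holds on the cone and why the Jordan-based definition is independent of the decomposition, which is a genuine improvement in rigour. The one step that does not work as written is the operator bound obtained by ``setting the second argument to $0$'': that yields $\norma{\mathcal{T}(t)\mu}^*_{\BL}\le C_1(t)\left(\norma{\mu^+}^*_{\BL}+\norma{\mu^-}^*_{\BL}\right)=C_1(t)\norma{\mu}_{\TV}$, and since for positive measures the $\BL$-dual norm equals the total mass, $\norma{\mu}_{\TV}$ can be arbitrarily larger than $\norma{\mu}^*_{\BL}$ (e.g.\ $\mu=\delta_x-\delta_{x+\varepsilon}$), so you do not get $2C_1(t)\norma{\mu}^*_{\BL}$ this way. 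The fix is immediate and is what the paper implicitly uses: once additivity on the cone is established, apply estimate (2) directly to the pair $\mu_1=\mu^+$, $\mu_2=\mu^-$ to obtain $\norma{\mathcal{T}(t)\mu}^*_{\BL}=\norma{\mathcal{T}(t)\mu^+-\mathcal{T}(t)\mu^-}^*_{\BL}\le C_1(t)\norma{\mu}^*_{\BL}$, which gives the Lipschitz bound needed for the density extension. With that correction your argument is complete and coincides with the paper's proof.
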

\begin{proof}
The assertion for the positive cone $\mathcal{X}_+$ follows from Proposition \ref{existenceAgnieszka} and from the observation that, in fact, the semigroup $\mathcal{T}(t)$ can be defined on the whole interval $[0,\infty)$, due to the arbitrary choice of $T<\infty$.  Indeed, two semigroups $\mathcal{T}^{T_1}(t)$ and $\mathcal{T}^{T_2}(t)$, defined for $t\in [0,T_1]$ and $t\in [0,T_2]$, respectively,  coincide on the interval $[0, \min\{T_1,T_2\}]$, because the corresponding solutions satisfy the weak formulation. Hence, the semigroup $\mathcal{T}(t)$ of solutions can be extended to the whole interval $[0,\infty)$.
Also note that the semigroup $\mathcal{T}(t)$ is Lipschitz with respect to time and initial data. To extend the result to the whole state space $\M(\R_+)$, we apply the Hahn-Jordan decomposition of a measure into its negative and positive part, and use the linearity of the problem.  Hence, a strongly continuous semigroup is defined for all initial data in the normed space $\mathcal M(\R_+)$ with the bounded Lipschitz distance. Also note that a Lipschitz operator can be extended to the closure of the domain \cite[Th.2.6]{Amman} $\overline{\mathcal{M}(\mathbb{R}_+)}^{||\cdot ||^*_{\BL}}=\mathcal{X}$.

To prove that it defines a strongly continuous semigroup for $t=0$, on the whole state space $\mathcal{X}$, we take an approximation of $\mu_0 \in \mathcal{X}$ by Radon measures $\mu_0^{\varepsilon}$, such that
$$ \norma{\mu_0 - \mu_0^{\varepsilon}}^*_{{\BL}}<\frac{\varepsilon}{2C_1(1)+1}$$,
with $C_1$ given in  Proposition \ref{existenceAgnieszka}. Using the estimates (2) and (3) from Proposition \ref{existenceAgnieszka}, we obtain
\begin{eqnarray*}
 \norma{\mathcal{T}(t)\,\mu_0 - \mu_0}^*_{{\BL}} \leq  \norma{\mathcal{T}(t)\,\mu_0 - \mathcal{T}(t)\,\mu_0^{\varepsilon}}^*_{{\BL}} + \norma{\mathcal{T}(t)\,\mu_0^{\varepsilon} - \mu_0^{\varepsilon}}^*_{{\BL}}+ \norma{\mu_0 - \mu_0^{\varepsilon}}^*_{{\BL}} <\varepsilon
\end{eqnarray*}
for 
\begin{equation*}
t<\delta= \frac{\varepsilon}{2\,C_2(1)\,\norma{\mu_0^{\varepsilon}}_{\TV}}.
\end{equation*}
\end{proof}

\begin{remark}
Note that the choice of the space $\mathcal X$ is essential. The strong continuity does not hold if we take the dual space $(\BL)^*$ instead of its closed subset $\mathcal X$, see \cite[Lemma 2]{GOU}.
\end{remark}

Next we are going to characterise the asymptotic behaviour of the semigroup $\mathcal{T}(t)$ generated by $\mathcal{A}+\mathcal{B}+\mathcal{C}$, and in turn the asymptotic behaviour of solutions of model \eqref{Model}. 
The asymptotic behaviour of $\mathcal{T}(t)$ is naturally determined by its growth bound $\omega_0$, together with the boundary spectrum of its generator (in the simplest case, its spectral bound). Let us recall the definition of the growth bound of a semigroup $\mathcal{T}(t)$, and the spectral bound of its generator $\mathcal{A}$.
\begin{equation*}
\omega_0=\omega_0(\mathcal{T}):=\inf\left\{w\in\mathbb{R}\,|\,\exists\, M_w\ge 1,\,\text{such that}\,  ||\mathcal{T}(t)||\le M_w e^{wt},\,\forall\,t\in\mathbb{R}_+ \right\},
\end{equation*}
\begin{equation*}
s(\mathcal{A})=\sup\left\{\text{Re}(\lambda)\,|\,\lambda\in\sigma(\mathcal{A})\right\},
\end{equation*}
where $\sigma(\mathcal{A})$ denotes the spectrum of the operator $\mathcal{A}$. Note that in general we have $-\infty\le s(\mathcal{A})\le\omega_0<\infty$. 

Furthermore, we recall that for a bounded linear operator $T$ on $\mathcal Y$, the so-called essential norm
is given by
\begin{equation}
	\|T\|_\text{ess}:=\text{dist}\,\left(T, K({\mathcal Y})\right),
\end{equation}
where $K({\mathcal Y})$ denotes the set of compact linear operators on $\mathcal Y$. 
The essential growth bound of the semigroup $\mathcal{T}(t)$ on $\mathcal Y$ with generator ${\mathcal A}$ is then defined as
\begin{equation}\label{ess1}
	\omega_\text{ess}\left(\mathcal{T}\right)\left[=\omega_\text{ess}\left(\mathcal{A}\right)\right]=\lim_{t\rightarrow \infty} \left(\frac{\ln \|\mathcal{T}(t)\|_\text{ess}}{t}\right).
\end{equation}
It is readily seen that, for any ${\mathcal K}\in K({\mathcal Y})$,
\begin{equation}\label{ess2}
	\omega_\text{ess}\left({\mathcal A}\right)=\omega_\text{ess}\left({\mathcal A}+{\mathcal K}\right).
\end{equation}
The significance of the essential growth bound lies in the fact that
\begin{equation}
	\omega_0\left({\mathcal T}\right)= \max\,\left\{\omega_\text{ess}\left({\mathcal T}\right), s\left({\mathcal A}\right)\right\},
\end{equation}
see \cite{EngelNagel} for more details.

There is an extensive literature of results on the asymptotic behaviour of positive semigroups, and in particular for positive irreducible semigroups exhibiting in addition certain compactness properties, see e.g. \cite{Arendt, Clement, EngelNagel}. Indeed, over the past couple of decades results concerning spectral properties of semigroups naturally arising in structured population dynamics became abundant, we just mention here the early papers \cite{Thieme1998DCDS,Thieme1998JMAA,Webb1987}. 
In paticular, we note that for eventually compact semigroups (which naturally govern structured population models with bounded individual state space) the spectral mapping theorem holds true, and in particular their essential spectrum is empty, see e.g. \cite{EngelNagel,FH}. Our model \eqref{Model} does not impose a finite maximal value of the structuring variable, hence the governing semigroup cannot be shown eventually compact. To overcome the lack of this desirable regularity property of the governing semigroup we are going to apply a different notion of compactness. In particular, let us recall from \cite{EngelNagel} the notion of quasi-compactness of a strongly continuous semigroup. 
\begin{definition}
A strongly continuous semigroup $\mathcal{T}(t)$ on a Banach space $\mathcal{X}$ is called quasi-compact if
\begin{equation*}
\lim_{t\rightarrow\infty} \inf \left\{\norma{\mathcal{T}(t)-\mathcal{K}}\,\,\vert\,\, \mathcal{K}\in K(\mathcal{X})\right\}=0.
\end{equation*}
\end{definition}
Note that, as stated in Proposition 3.5. in \cite[Ch.V]{EngelNagel}, a semigroup $\mathcal{T}(t)$ is quasi-compact if and only if $\omega_{ess}(\mathcal{T})<0$. 
\begin{proposition}\label{quasi-compact}
Let  Assumptions \ref{Assum}(i)-(iii) hold true, and in addition assume  that $b'(x) \leq 0$ holds; and that there exists a constant  $\kappa>0$, such that for every $x \in \R_+$ we have $c(x) \geq \left|c'(x)\right| + \kappa$. Then, the semigroup $\mathcal{T}(t)$ generated by $\mathcal{A}+\mathcal{B}+\mathcal{C}$ is quasi-compact.
\end{proposition}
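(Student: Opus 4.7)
The plan is to show $\omega_{\text{ess}}(\mathcal{T})<0$ by splitting the generator as $(\mathcal{A}+\mathcal{B})+\mathcal{C}$ and combining: (i) an explicit characteristic estimate giving $\omega_0(\mathcal{T}_{\mathcal{A}+\mathcal{B}})\le -\kappa$, with (ii) equation \eqref{ess2} to absorb $\mathcal{C}$ as a compact perturbation. Once both are in place, $\omega_{\text{ess}}(\mathcal{T})\le -\kappa<0$ and quasi-compactness follows by Proposition~3.5 in \cite[Ch.~V]{EngelNagel}.

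For step (i) I would represent $\mathcal{T}_{\mathcal{A}+\mathcal{B}}(t)$ through the characteristic flow $\dot X = b(X)$, $X(0,x)=x$. Duality against $\phi\in\BL(\R_+)$ gives
\begin{equation*}
\int \phi\,\ud\bigl[\mathcal{T}_{\mathcal{A}+\mathcal{B}}(t)\mu\bigr]
= \int \Phi(x)\,\ud\mu(x),
\qquad
\Phi(x):=\phi(X(t,x))\,e^{-I(x)},
\quad I(x):=\int_0^t c(X(s,x))\,\ud s.
\end{equation*}
The assumption $b'\le 0$ makes the Jacobian $\partial_x X(t,x) = \exp\bigl(\int_0^t b'(X(s,x))\,\ud s\bigr)$ lie in $(0,1]$, while $|c'|\le c-\kappa$ combined with $I(x)\ge \kappa t$ yields $|\partial_x I(x)|\le I(x)-\kappa t$. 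Differentiating $\Phi$ and using the BL norm \eqref{ourBL}, which combines $|\phi|$ and $|\phi'|$ pointwise, gives
\begin{equation*}
|\Phi(x)|+|\Phi'(x)|
\le e^{-I(x)}\bigl(1+I(x)-\kappa t\bigr)\bigl(|\phi|+|\phi'|\bigr)\!\bigl(X(t,x)\bigr).
\end{equation*}
The elementary bound $(1+J)e^{-J}\le 1$ for $J=I(x)-\kappa t\ge 0$ then yields $\norma{\Phi}_{\BL} \le e^{-\kappa t}\norma{\phi}_{\BL}$, and dualising gives $\norma{\mathcal{T}_{\mathcal{A}+\mathcal{B}}(t)}\le e^{-\kappa t}$, whence $\omega_{\text{ess}}(\mathcal{A}+\mathcal{B})\le\omega_0(\mathcal{T}_{\mathcal{A}+\mathcal{B}})\le -\kappa$.

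For step (ii) I would verify compactness of $\mathcal{C}$ on $\mathcal{X}$ by approximating $\mathcal{C}\mu=\int \eta(y)\,\ud\mu(y)$ with finite-rank operators $\mathcal{C}_N\mu=\sum_i \eta(y_i)\langle\psi_i,\mu\rangle$, for a Lipschitz partition of unity $\{\psi_i\}\subset\BL(\R_+)$. The duality estimate $\norma{\mathcal{C}\mu-\mathcal{C}_N\mu}_{\BL}^{*}\le \norma{\eta-\tilde\eta_N}_{\BL(\R_+;\mathcal{X})}\norma{\mu}_{\BL}^{*}$, with $\tilde\eta_N(y)=\sum_i\eta(y_i)\psi_i(y)$, reduces operator-norm convergence of $\mathcal{C}_N$ to $\mathcal{C}$ to a uniform vector-valued BL approximation of $\eta$ on $\R_+$.

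The main obstacle is precisely this compactness step: unlike in $L^1$, the unit ball of $\mathcal{X}$ is neither tight nor bounded in total variation, so step-function approximations of $\eta$ need not converge uniformly in the BL pre-dual norm on the non-compact half-line. The heart of the argument therefore lies in exploiting the Lipschitz regularity of $\eta$ together with the structure of $\mathcal{X}$ developed in Section~2 to recover the essential compactness of the birth operator needed to close the perturbation argument via \eqref{ess2}.
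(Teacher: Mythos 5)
Your overall strategy coincides with the paper's: peel off $\mathcal{C}$ as a compact perturbation and show that $\mathcal{T}_{\mathcal{A}+\mathcal{B}}(t)$ is uniformly exponentially stable by estimating the dual semigroup on $\BL(\R_+)$ in the specific norm \eqref{ourBL}. Your step (i) is correct and is in fact a more self-contained variant of the paper's argument: the paper treats $\mathcal{A}^*$ and $\mathcal{B}^*$ separately (contractivity of the transport part from $b'\le 0$, the bound $e^{-\kappa t}$ for the multiplication part from $c\ge |c'|+\kappa$) and then glues the two estimates together with the Lie--Trotter product formula, whereas you obtain $\norma{\mathcal{T}^*_{\mathcal{A}+\mathcal{B}}(t)\phi}_{\BL}\le e^{-\kappa t}\norma{\phi}_{\BL}$ in one stroke from the characteristic representation; the Jacobian bound $\partial_x X\in(0,1]$ and the inequality $|\partial_x I|\le I-\kappa t$ are exactly where the two extra hypotheses enter, and the computation is sound. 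The direct route avoids invoking Trotter and the (merely asserted) contractivity of $\mathcal{T}^*_{\mathcal{A}}$; the paper's splitting buys modularity.

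The genuine gap is step (ii), and your closing paragraph correctly flags it --- but the difficulty is more severe than an issue of tightness on the non-compact half-line. Under Assumption \ref{Assum}(ii) alone, $\mathcal{C}$ need not be compact at all: $\eta(y)=\delta_y$ is bounded and $1$-Lipschitz as a map $\R_+\to(\mathcal{X}_+,\norma{\cdot}^*_{\BL})$, yet $\mathcal{C}\mu=\int\delta_y\,\ud\mu(y)=\mu$, so $\mathcal{C}=\mathrm{Id}$, which is never compact on the infinite-dimensional space $\mathcal{X}$ (this already fails on a bounded interval, so localisation does not help). The failure shows up precisely in your duality estimate: with $g_\phi(y)=\int\phi\,\ud[\eta(y)]$ you need $\norma{g_\phi-\sum_i g_\phi(y_i)\psi_i}_{\BL}\to 0$ uniformly over $\norma{\phi}_{\BL}\le 1$; the sup-norm part is $O(\mathrm{Lip}(\eta)\,h)$, but the Lipschitz-seminorm part of the error is only $O(\mathrm{Lip}(g_\phi))$ and does not shrink with the mesh. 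The paper disposes of this step in one sentence (``$\mathcal{C}$ can be approximated by operators of finite dimensional range'') and therefore shares the gap. Closing it requires a structural hypothesis beyond Assumptions \ref{Assum}(i)--(iii) --- e.g.\ that $\eta$ is a uniformly convergent sum of rank-one kernels $\sum_k f_k(y)\nu_k$ with suitable decay at $y=\infty$, or a higher-order modulus of continuity for $\eta$ ensuring the interpolation error is small in the full $\BL$ norm --- after which your appeal to \eqref{ess2} and Proposition 3.5 of \cite[Ch.V]{EngelNagel} completes the proof exactly as in the paper.
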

\begin{proof}
First note that the integral operator $\mathcal{C}$ is compact, since it can be approximated by operators of finite dimensional range. Hence by invoking Proposition 3.6 from \cite[Ch.V]{EngelNagel} it is sufficient to show that the semigroup $\mathcal{T}_{\mathcal{A}+\mathcal{B}}(t)$ generated by $\mathcal{A}+\mathcal{B}$ is quasi-compact. 

To this end note that on the grounds of  Proposition 3.5 in \cite[Ch.V]{EngelNagel} it is sufficient to show that the following inequality holds true 
\begin{equation}\label{quasi-ess}
\max\left\{s(\mathcal{A}+\mathcal{B}),\omega_{ess}\left(\mathcal{T}_{\mathcal{A}+\mathcal{B}}\right)\right\}=\omega_0\left(\mathcal{T}_{\mathcal{A}+\mathcal{B}}\right)<0;
\end{equation}
that is, the semigroup $\mathcal{T}_{\mathcal{A}+\mathcal{B}}(t)$ is strictly contractive.

To estimate the norm of the semigroup $\mathcal{T}_{\mathcal{A}+\mathcal{B}}(t)$, we consider the dual problem (backward equation)
 \begin{eqnarray}
   \partial_t \phi(t,x) = b(x) \; \partial_x
   \phi(t,x) + c(x) \; \phi(t,x).   \label{dual}
 \end{eqnarray}
The dual problem above is governed by the adjoint semigroup $\mathcal{T}_{\mathcal{A}+\mathcal{B}}^*(t)$, and we have 
\begin{equation}
\norma{\mathcal{T}_{\mathcal{A}+\mathcal{B}}}_{\mathcal L\left((\mathcal X, \norma{\cdot}_{\BL}^*);(\mathcal X, \norma{\cdot}_{\BL}^*)\right)} = \norma{\mathcal{T}^*_{\mathcal{A}+\mathcal{B}}}_{\mathcal L\left( \BL;\BL\right)}.
\end{equation}
In particular, we consider two problems given by the adjoint operators
\begin{align}
\mathcal{A^*}\,\phi&=  b \, \partial_x \phi, \label{A*} \\
\mathcal{B^*}\,\phi&=c\,\phi. \label{B*}
\end{align}

First we note that the semigroup $\mathcal{T}^*_{\mathcal{A}}(t)$ generated by the adjoint operator $\mathcal{A^*}$ is contractive (see e.g. \cite[Ch.II]{EngelNagel}), i.e. $\left|\left|\mathcal{T}^*_{\mathcal{A}}(t)\right|\right|\le 1,\,\forall\,t\ge 0$ holds, if $b$ is a monotone decreasing function, i.e. $b' \leq 0$. Consequently, it  follows from the definition of the growth bound $\omega_0$ (see above) that $\omega_0\left(\mathcal{T}^*_{\mathcal{A}}\right)\le 0$ holds. 

Next we show that the (bounded) operator $\mathcal{B}^*$ generates a positive contraction semigroup $\mathcal T^*_{\mathcal{B}}(t)$ satisfying $\norma{\mathcal T^*_{\mathcal{B}}(t)}\le e^{-\kappa t},\, t\ge 0$, for some $\kappa>0$. 

Since the corresponding backward equation holds for negative times, we change variables $\tau = -t$ and obtain
\begin{eqnarray*}
\partial_{t} \phi(t,x) = - c(x)  \phi(t,x).
\end{eqnarray*}
Differentiating with respect to $x$ both sides of the equation above yields
\begin{eqnarray*}
\partial_t \partial_x  \phi(t,x) = - c(x) \partial_x \phi(t,x)-  c'(x)\; \phi(t,x).
\end{eqnarray*}
We have the following estimates
\begin{eqnarray*}
\partial_t |\phi(t,x)| &\leq& - c(x)  |\phi(t,x)|,\\
\partial_t |\partial_x \phi(t,x)| &\leq& - c(x) |\partial_x \phi(t,x)| - c'(x) \; \phi(t,x) \;  \rm{sgn}(\partial_x \phi(t,x))\\
& \leq&   - c(x) |\partial_x \phi(t,x)| + |c'(x)| \; |\phi(t,x)|.
\end{eqnarray*}
Assuming that there exists a $\kappa>0$ such that $c(x) \geq   |c'(x)| + \kappa$ for every $x\in \R_+$, we obtain
\begin{eqnarray*}
\partial_t \left( |\phi(t,x)| + |\partial_x \phi(t,x)|\right)  &\leq& - \kappa \left( |\phi(t,x)| + |\partial_x \phi(t,x)|\right).\\
\end{eqnarray*}
Therefore we conclude that $\norma{\mathcal T^*_{\mathcal{B}}(t)}\le e^{-\kappa t},\, \forall\,t\ge 0$. Note that the operator $\mathcal B^*$ is bounded. It then follows from a version of the Trotter product formula (see e.g. Corollary 5.8  in \cite[Ch.III]{EngelNagel}) that the semigroup $\mathcal{T}^*_{\mathcal{A}+\mathcal{B}}(t)$ generated by $\mathcal{A}^*+\mathcal{B}^*$ satisfies 
\begin{equation}
\left|\left|\mathcal{T}^*_{\mathcal{A}+\mathcal{B}}(t)\right|\right|\le \exp\left(-\kappa\,t\right),\quad \forall\,t\ge 0,
\end{equation}
which implies
\begin{equation}
\left|\left|\mathcal{T}_{\mathcal{A}+\mathcal{B}}(t)\right|\right|\le \exp\left(-\kappa\,t\right),\quad \forall\,t\ge 0,
\end{equation}
and therefore we have $\omega_0\left(\mathcal{T}_{\mathcal{A}+\mathcal{B}}\right)\le -\kappa<0$, and the proof is completed.
\end{proof}

The significance of quasi-compactness of a semigroup $\mathcal{T}(t)$ is demonstrated by the following cha\-rac\-te\-ri\-sation theorem, recalled from \cite{EngelNagel} for the reader's convenience.
\begin{theorem}\cite[Ch.V, Theorem 3.7]{EngelNagel}\label{quasi-compact-char}
Let $\mathcal{T}(t)$ be a quasi-compact strongly con\-ti\-nuous semigroup with generator $\mathcal{A}$ on the Banach space $\mathcal{X}$. Then, the following holds.
\begin{itemize}
\item[(i)] The set $\left\{\lambda\in\sigma(\mathcal{A})\,|\, Re(\lambda)\ge 0\right\}$ is finite (possibly empty!), and consists of poles of the resolvent operator $R(\cdot, \mathcal{A})$ of finite algebraic multiplicity.
\item[(ii)] If we denote the set of poles by $\lambda_1,\cdots,\lambda_m$ and their corresponding residues by $P_1,\cdots,P_m$, with orders $k_1,\cdots,k_m$, respectively; then we have
\begin{align*}
\mathcal{T}=\mathcal{T}_1(t)+\cdots+\mathcal{T}_m+\mathcal{R}(t),
\end{align*}
where
\begin{align*}
\mathcal{T}_n(t)=e^{\lambda_n\,t}\displaystyle\sum_{j=0}^{k_n-1}\frac{t^j}{j!}(\mathcal{A}-\lambda_n)^j\,P_n,\quad t\ge 0,\quad 1\le n\le m,
\end{align*}
and
\begin{align*}
||\mathcal{R}(t)||\le M\,e^{-\varepsilon\,t},\quad \text{for some} \quad \varepsilon>0,\,M\ge 1,\quad \forall\,t\ge 0.
\end{align*}
\end{itemize}
\end{theorem}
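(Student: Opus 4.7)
The plan is to reduce the spectral analysis of the semigroup $\mathcal{T}(t)$ to the spectral analysis of the single bounded operator $\mathcal{T}(t_0)$ for a suitably chosen $t_0>0$, and then lift the resulting finite-rank decomposition back to the generator $\mathcal{A}$ via the Riesz functional calculus. Since quasi-compactness gives $\omega_{\text{ess}}(\mathcal{T})<0$, I would first pick $t_0>0$ so that $\|\mathcal{T}(t_0)\|_{\text{ess}}=\rho<1$, i.e.\ there is a compact operator $\mathcal{K}$ with $\|\mathcal{T}(t_0)-\mathcal{K}\|<1$. By Nussbaum's formula for the essential spectral radius combined with classical Fredholm--Riesz theory for compact perturbations of bounded operators, this forces $\sigma(\mathcal{T}(t_0))\cap\{|\mu|>\rho\}$ to consist of finitely many isolated eigenvalues of finite algebraic multiplicity, each a pole of $R(\cdot,\mathcal{T}(t_0))$. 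In particular, $\sigma(\mathcal{T}(t_0))\cap\{|\mu|\ge 1\}$ is finite.

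I would then transfer this information to the generator by combining the spectral inclusion $e^{t_0\sigma(\mathcal{A})}\subseteq\sigma(\mathcal{T}(t_0))\setminus\{0\}$ with the pole-order correspondence stating that $\lambda$ is a pole of order $k$ of $R(\cdot,\mathcal{A})$ if and only if $e^{t_0\lambda}$ is a pole of order $k$ of $R(\cdot,\mathcal{T}(t_0))$, obtained by matching the Laurent expansions of the two resolvents and exploiting the Dunford functional calculus. Because $\{\lambda\in\sigma(\mathcal{A}):\text{Re}(\lambda)\ge 0\}$ is mapped into the finite set $\sigma(\mathcal{T}(t_0))\cap\{|\mu|\ge 1\}$ by $\lambda\mapsto e^{t_0\lambda}$, and because each fiber of this map inside $\sigma(\mathcal{A})$ is finite (by a further appeal to the spectral mapping theorem for the peripheral spectrum of quasi-compact semigroups), assertion (i) follows.

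For assertion (ii), enumerate the poles $\lambda_1,\dots,\lambda_m$, choose small positively oriented contours $\Gamma_n$ enclosing only $\lambda_n$, and set
\begin{equation*}
P_n:=\frac{1}{2\pi i}\oint_{\Gamma_n}R(\lambda,\mathcal{A})\,\ud\lambda.
\end{equation*}
Each $P_n$ is a bounded finite-rank projection that commutes with $\mathcal{T}(t)$, and on the finite-dimensional invariant subspace $\text{ran}(P_n)$ the part of $\mathcal{A}$ is a single generalised eigenblock with eigenvalue $\lambda_n$ of nilpotent index $k_n$, so $\mathcal{T}(t)P_n$ coincides with the truncated exponential $\mathcal{T}_n(t)$ stated in the theorem. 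Setting $Q=I-\sum_{n=1}^m P_n$ and $\mathcal{R}(t)=\mathcal{T}(t)Q$, the restriction of $(\mathcal{A},\mathcal{T})$ to $Q\mathcal{X}$ inherits quasi-compactness but has spectral bound strictly less than $0$; the identity $\omega_0=\max\{\omega_{\text{ess}},s(\mathcal{A})\}$ recalled just before the theorem then yields $\omega_0(\mathcal{T}|_{Q\mathcal{X}})<0$, hence $\|\mathcal{R}(t)\|\le M e^{-\varepsilon t}$ for some $\varepsilon,M>0$.

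The main obstacle is the transfer step: one must establish that every $\lambda\in\sigma(\mathcal{A})$ with $\text{Re}(\lambda)\ge 0$ is actually a pole of $R(\cdot,\mathcal{A})$ (rather than merely an isolated spectral value), and that its pole order matches that of $e^{t_0\lambda}$ as a pole of $R(\cdot,\mathcal{T}(t_0))$. This hinges on the pole-order identity between the resolvents of a semigroup and its generator, whose verification requires carefully aligning the Laurent coefficients around matching spectral points; once this correspondence is in hand, everything else is a clean application of the Riesz functional calculus together with the recalled essential-growth-bound characterisation of quasi-compactness.
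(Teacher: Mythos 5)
The paper does not prove this statement at all: it is recalled verbatim from \cite[Ch.V, Theorem 3.7]{EngelNagel} purely for the reader's convenience, so there is no in-paper argument to compare against. Your sketch reproduces the standard textbook proof of that result (pass to the single operator $\mathcal{T}(t_0)$, use $r_{\mathrm{ess}}(\mathcal{T}(t_0))=e^{t_0\omega_{\mathrm{ess}}}<1$ and Fredholm--Riesz theory to isolate finitely many finite-multiplicity poles outside the unit disk, transfer to $\mathcal{A}$ via spectral inclusion and the pole correspondence, then split off the Riesz projections and bound the remainder), which is essentially the route taken in Engel--Nagel itself. The one justification I would flag is your appeal to a ``spectral mapping theorem for the peripheral spectrum'' to get finiteness of the fibers of $\lambda\mapsto e^{t_0\lambda}$: no such mapping theorem holds for general $C_0$-semigroups, and the correct argument is that the spectral projections of $\mathcal{A}$ at distinct points of a fiber are mutually disjoint subprojections of the finite-rank spectral projection of $\mathcal{T}(t_0)$ at $e^{t_0\lambda}$, so only finitely many can be nonzero (this, together with the pole-order correspondence you defer, is \cite[Ch.IV, Cor.~3.8]{EngelNagel}). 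With that substitution your outline is a faithful, if compressed, account of the cited proof.
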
 

Moreover, if there exists a dominant eigenvalue $\lambda_*$ of multiplicity one with residue $P_*$, then 
\begin{equation}\label{AEG}
\left|\left|e^{-\lambda_*\,t}\,\mathcal{T}(t)-P_*\right|\right|\le M\,e^{-\varepsilon\,t},
\end{equation}
for some $M\ge 1$ and $\varepsilon>0$. This property of a semigroup $\mathcal{T}(t)$ is called asynchronous exponential growth (AEG for short), see e.g. \cite{Webb1987}. We  note that according to the definition of AEG in \cite{Webb1987}, the dominant eigenvalue $\lambda_*$ does not necessarily have to have algebraic multiplicity one. 

The existence of a dominant eigenvalue $\lambda_*$ of multiplicity one is guaranteed if the semigroup $\mathcal{T}(t)$ is irreducible, unless its spectrum is empty, see e.g. \cite[C-III, Proposition 3.5]{Arendt}.

Next we recall a definition of irreducibility (see e.g. \cite[C-III, Definition 3.1]{Arendt}), and formulate a sufficient condition for the irreducibility of the semigroup $\mathcal{T}(t)$ generated by $\mathcal{A}+\mathcal{B}+\mathcal{C}$. 
Below $\langle\cdot,\cdot\rangle$ stands for the semi-inner product, i.e. the natural pairing between elements of the Banach space $\mathcal{X}$ and its dual $\mathcal{X}^*$.

\begin{definition}\label{irr}
The positive semigroup $\mathcal{T}(t)$ on the Banach lattice $\mathcal{X}$ is called irreducible, if for all $0\not\equiv x\in \mathcal{X}_+,\,0\not\equiv x^*\in\mathcal{X}^*_+$ there exists a time $t>0$ such that $\langle\mathcal{T}(t)\,x,x^*\rangle >0$.
\end{definition} 
Note that in our setting we have $\langle \mu,\phi\rangle=\int_0^\infty\phi\,\ud \mu$ for $\mu\in\mathcal{X}_+,\,\phi\in\mathcal{X}^*_+$.
\begin{proposition}\label{irreducible}
If there exists a $\hat{y}>0$, such that $0\in\displaystyle\bigcap_{y>\hat{y}}\text{supp}\,(\eta(y))$, then the semigroup $\mathcal{T}(t)$ generated by $\mathcal{A}+\mathcal{B}+\mathcal{C}$ is irreducible. 
\end{proposition}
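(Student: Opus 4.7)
The goal is to verify Definition \ref{irr}: for every $\mu_0 \in \mathcal{X}_+ \setminus \{0\}$ and every $\phi \in \BL_+(\mathbb{R}_+) \setminus \{0\}$, produce a time $t > 0$ with $\langle \mathcal{T}(t)\mu_0,\phi\rangle > 0$. The main tool is the Dyson--Phillips lower bound
\begin{equation*}
\mathcal{T}(t)\mu_0 \; \geq \; \int_0^t \mathcal{T}_{\mathcal{A}+\mathcal{B}}(t-s)\,\mathcal{C}\,\mathcal{T}_{\mathcal{A}+\mathcal{B}}(s)\mu_0 \,\ud s,
\end{equation*}
obtained from one iteration of the variation-of-constants formula, the dropped remainder being non-negative by positivity of $\mathcal{C}$ and $\mathcal{T}_{\mathcal{A}+\mathcal{B}}(t)$. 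The plan is to chain three mechanisms---transport to the right by $b$, birth near $0$ by $\mathcal{C}$, and further transport to a point where $\phi$ is positive---so that the integrand is strictly positive on an open subset of $[0,t]$.

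Since $b > 0$ is continuous, the characteristic flow $\Phi_t$ of $\dot{x}=b(x)$ is a strictly increasing homeomorphism of $\mathbb{R}_+$ with $\Phi_t(x) \to \infty$ as $t \to \infty$, and $\mathcal{T}_{\mathcal{A}+\mathcal{B}}(t)$ acts on a measure $\mu$ as a pushforward under $\Phi_t$ weighted by the strictly positive survival factor $\exp(-\int_0^t c(\Phi_r(x))\,\ud r)$; in particular $\text{supp}(\mathcal{T}_{\mathcal{A}+\mathcal{B}}(t)\mu) = \Phi_t(\text{supp}(\mu))$. Choose $x_0 \in \text{supp}(\mu_0)$ and $s_0 > 0$ with $\Phi_{s_0}(x_0) > \hat y$; then there is an open interval $J \ni s_0$ such that $\mathcal{T}_{\mathcal{A}+\mathcal{B}}(s)\mu_0$ puts positive mass on $(\hat y,\infty)$ for every $s \in J$.

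Next, for any $\nu \in \mathcal{X}_+$ with $\nu((\hat y,\infty)) > 0$ and any neighbourhood $V$ of $0$ in $\mathbb{R}_+$,
\begin{equation*}
(\mathcal{C}\nu)(V) \; = \; \int_{\mathbb{R}_+}\eta(y)(V)\,\ud\nu(y) \; \geq \; \int_{(\hat y,\infty)}\eta(y)(V)\,\ud\nu(y) \; > \; 0,
\end{equation*}
because the hypothesis $0 \in \text{supp}(\eta(y))$ for $y > \hat y$ forces $\eta(y)(V) > 0$ on $(\hat y,\infty)$; hence $0 \in \text{supp}(\mathcal{C}\,\mathcal{T}_{\mathcal{A}+\mathcal{B}}(s)\mu_0)$ for all $s \in J$.

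Finally, fix $x^* \in \mathbb{R}_+$ with $\phi(x^*) > 0$ and an open neighbourhood $W \ni x^*$ on which $\phi$ is bounded away from $0$. By continuity and surjectivity of $t \mapsto \Phi_t(0)$ onto $[0,\infty)$, pick $\tau^* \geq 0$ with $\Phi_{\tau^*}(0) \in W$ and a neighbourhood $V$ of $0$ with $\Phi_{\tau^*}(V) \subset W$. Set $t := s_0 + \tau^*$; after possibly shrinking $J$ and $V$, joint continuity of $(s,y)\mapsto \Phi_{t-s}(y)$ preserves $\Phi_{t-s}(V) \subset W$ for all $s \in J$, so the measure $\mathcal{T}_{\mathcal{A}+\mathcal{B}}(t-s)\mathcal{C}\mathcal{T}_{\mathcal{A}+\mathcal{B}}(s)\mu_0$ charges $W$ and pairs strictly positively with $\phi$. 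Continuity in $s$ of this pairing upgrades the pointwise positivity to a strictly positive integral in the Duhamel bound, yielding $\langle \mathcal{T}(t)\mu_0,\phi\rangle > 0$. The main technical obstacle I foresee is the measure-theoretic bookkeeping: justifying the weighted pushforward formula (and hence the support identity) for $\mathcal{T}_{\mathcal{A}+\mathcal{B}}(t)$ on $\mathcal{X}$, and promoting the pointwise positivity of the Duhamel integrand into positivity of its time integral via continuity.
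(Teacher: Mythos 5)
Your argument is correct, and it rests on the same underlying mechanism as the paper's proof --- rightward transport past $\hat y$ driven by $b>0$, deposition of mass at $0$ by $\mathcal{C}$ thanks to the support hypothesis on $\eta$, and a second rightward transport into the set where the test function is positive --- but the formalisation is genuinely different. The paper first replaces Definition \ref{irr} by the equivalent support condition $\bigcup_{t\ge 0}\mathrm{supp}(\mathcal{T}(t)\mu)=\mathbb{R}_+$ and then argues qualitatively about how the support of the \emph{full} semigroup propagates; its two key propagation claims (rightward spread of the support, and $0$ entering the support once the support meets $(\hat y,\infty)$) are asserted rather than derived. You instead verify the pairing definition directly and obtain both claims from a single explicit device: the first-order Dyson--Phillips lower bound $\mathcal{T}(t)\ge\int_0^t\mathcal{T}_{\mathcal{A}+\mathcal{B}}(t-s)\,\mathcal{C}\,\mathcal{T}_{\mathcal{A}+\mathcal{B}}(s)\,\mathrm{d}s$ (valid because $\mathcal{C}$ is a bounded positive perturbation and every term of the perturbation series is positive) together with the weighted-pushforward representation of the unperturbed semigroup. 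This buys a self-contained, quantitative argument in which every positivity statement is reduced to the explicitly solvable semigroup $\mathcal{T}_{\mathcal{A}+\mathcal{B}}$, at the cost of the bookkeeping you flag, all of which is standard: the characteristics formula on $\mathcal{M}_+(\mathbb{R}_+)$, lower semicontinuity of $\nu\mapsto\nu(V)$ for open $V$ (which makes $y\mapsto\eta(y)(V)$ integrable), and continuity in $s$ of the Duhamel integrand. Two details worth tidying: $\mathrm{supp}\bigl((\Phi_t)_{\#}\nu\bigr)$ is a priori the closure of $\Phi_t(\mathrm{supp}\,\nu)$, which here coincides with the image because $\Phi_t$ is a homeomorphism onto a closed subset of $\mathbb{R}_+$; and if $\phi$ is positive only near $0$ then $\tau^*=0$, so $J$ must be taken as a one-sided interval $(s_0-\epsilon,s_0]$, which still has positive length and does not affect the conclusion.
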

\begin{proof}
First note that the above characterisation of irreducibility (Definition \ref{irr}) is equivalent to the following condition: \begin{equation*}
\forall\,\mu\in\mathcal{X}_+,\,\mu\not\equiv 0\quad \text{we have}\quad \displaystyle\bigcup_{t\ge 0} \text{supp}(\mathcal{T}(t)\,\mu)=\mathbb{R}_+.
\end{equation*}
Next note that, since $b>0$ and $c$ is bounded (see Assumption \ref{Assum}); for every $\mu\in\mathcal{X}_+$ it holds that if for some time $t^*\ge 0$ one has $y^*\in \text{supp}(\mathcal{T}(t^*)\,\mu)$, then for all $y>y^*$ there exists a time $t\ge t^*$ such that $y\in\text{supp}(\mathcal{T}(t)\,\mu)$. 

Now take any $0\not\equiv \mu\in\mathcal{X}_+$. By the previous observation (abd since $\mu$ is not the zero element), there exists a $t^*\ge 0$, such that there exists a $y^*>\hat{y}$, such that we have $y^*\in\text{supp}(\mathcal{T}(t^*)\,\mu)$. Then, since $0\in \text{supp}\,\eta(y^*)$, it follows that for every $t>t^*$ we have $0\in \text{supp}(\mathcal{T}(t)\,\mu)$, and therefore by the observation above we have $\displaystyle\bigcup_{t\ge 0} \text{supp}(\mathcal{T}(t)\,\mu)=\mathbb{R}_+$.
\end{proof}
Note that from the biological point of view the irreducibility condition above is very natural, it requires (when interpreting the structuring variable $x$ as individual size for example) that large individuals produce offspring of minimal size, see e.g. \cite{CDF}.

We are now in the position to conclude by formulating our main result, describing the asymptotic behaviour of solutions of model \eqref{Model} in the following theorem.
\begin{theorem}\label{mainresult}
Let the assumptions of Proposition \ref{quasi-compact} hold true. Then, one of the following holds true.
\begin{itemize}
\item[(i)] The semigroup $\mathcal{T}(t)$ generated by $\mathcal{A}+\mathcal{B}+\mathcal{C}$ is uniformly exponentially stable, 
i.e. $\omega_0(\mathcal{T})<0$, that is, solutions of model \eqref{Model} tend to zero.
\item[(ii)] There exists a $\lambda_*\ge 0$ and a finite rank operator $P_*$ on $\mathcal{X}$ such that the semigroup $\mathcal{T}(t)$ decomposes as
\begin{equation*}
\mathcal{T}(t)=\mathcal{T}_*(t) + Q(t),
\end{equation*}
where
\begin{equation*}
\mathcal{T}_*(t)=e^{\lambda_*\, t}\sum_{j=0}^{k_*-1}\frac{t^j}{j!}\left(\mathcal{A}+\mathcal{B}+\mathcal{C}-\lambda_*\right)^j\, P_*,
\end{equation*}
and
\begin{equation*}
||Q(t)||\le M_\delta e^{(\lambda_*-\delta) t},\quad \text{for some}\quad \delta>0,\, M_\delta\,\ge 1,\quad \forall\, t\ge 0.
\end{equation*}
\item[(iii)]
If in addition, the irreducibility condition in Proposition \ref{irreducible} holds true, then there exists a rank one operator $P_*$ such that
\begin{equation*}
\displaystyle\lim_{t\to \infty} e^{-\lambda_*\,t}\,\mathcal{T}(t)=P_*.
\end{equation*}
\end{itemize}
\end{theorem}

\section{Further spectral properties}

\begin{enumerate}
\item The proof of Proposition \ref{quasi-compact} in fact shows that $\omega_{ess}(\mathcal{T})\le -\kappa$, i.e. the essential spectrum of $\mathcal{T}(t)$ is contained in the left half-plane $\left\{\lambda\in\mathbb{C}\,|\,\text{Re}(\lambda)\le -\kappa\right\}$.
\item It is worthwhile to note that there is also a different concept of compactness of a semigroup. In particular, a semigroup $\mathcal{T}(t)$ is called essentially compact, if $\omega_{ess}(\mathcal{T})<\omega_0(\mathcal{T})$ holds, i.e. the essential growth bound is strictly less than the growth bound of the semigroup (but possibly both being positive), see e.g. \cite[Ch.V]{EngelNagel}. For essentially compact (but not necessarily quasi-compact) semigroups a similar result to Theorem \ref{quasi-compact-char} can be established. Moreover, essential compactness is a necessary condition for balanced exponential growth, as shown in \cite{Thieme1998DCDS}. To establish essential compactness of the semigroup governing model \eqref{Model} one might be able to relax the lower bound we imposed on the mortality. But we also note that in fact in principle it is possible that 
$$-\infty<\omega_{ess}(\mathcal{T})=s(\mathcal{A}+\mathcal{B}+\mathcal{C})=\omega_0(\mathcal{T})<0,$$ in which case the semigroup $\mathcal{T}(t)$ is not essentially compact, but it is quasi-compact. In fact, case (i) in Theorem \ref{mainresult} covers this scenario, see also below for more details. 
Also note that an essentially compact semigroup can be rescaled to obtain $\omega_{ess}(\mathcal{T})<\omega_0(\mathcal{T})<0$, i.e. the rescaled semigroup is quasi-compact.
\item Case (i) in Theorem \ref{mainresult} covers the following, mutually exclusive (and exhaustive list of) possible scenarios. 
\begin{itemize} 
\item The spectrum of $\mathcal{A}+\mathcal{B}+\mathcal{C}$ is empty (in which case by definition we have $s(\mathcal{A}+\mathcal{B}+\mathcal{C})=-\infty$. 
\item The spectrum of $\mathcal{A}+\mathcal{B}+\mathcal{C}$ is not empty, in which case by virtue of positivity $s(\mathcal{A}+\mathcal{B}+\mathcal{C})\in\sigma(\mathcal{A}+\mathcal{B}+\mathcal{C})$. Moreover, we have 
$$s(\mathcal{A}+\mathcal{B}+\mathcal{C})=-\kappa,$$ the semigroup does not necessarily exhibit balanced/asynchronous exponential growth. The semigroup $\mathcal{T}(t)$ is also not necessarily essentially compact, (but it is quasi-compact).
\item The spectrum of $\mathcal{A}+\mathcal{B}+\mathcal{C}$ is not empty, moreover we have  
$$-\kappa<s(\mathcal{A}+\mathcal{B}+\mathcal{C})\in\sigma(\mathcal{A}+\mathcal{B}+\mathcal{C})<0,$$ 
in which case the semigroup exhibits balanced/asynchronous exponential growth, or rather decay. The semigroup $\mathcal{T}(t)$ is both essentially compact and quasi-compact.
\end{itemize}
Case (ii) in Theorem \ref{mainresult} covers the following qualitatively different cases. 
[Note that in both cases below, $\lambda_*=s(\mathcal{A}+\mathcal{B}+\mathcal{C})$ is necessarily an isolated eigenvalue of finite algebraic multiplicity.]
\begin{itemize}
\item $\lambda_*=0$, and there exists a family of steady states of model \eqref{Model}, and solutions of model \eqref{Model} approach this (possibly multidimensional) subspace of $\mathcal{X}_+$. 
\item  $\lambda_*>0$, and the semigroup $\mathcal{T}(t)$ exhibits balanced exponential growth (with a possibly multi-dimensional global attractor). If in addition the semigroup $\mathcal{T}(t)$ is irreducible, then it exhibits asynchronous exponential growth with a one dimensional globally attracting subspace, which is spanned by the so-called final-size distribution.
\end{itemize}
\end{enumerate}

\section{Conclusion}

We introduced and studied the asymptotic behaviour of a linear structured population model, which we formulated on the space of Radon measures. Our model exhibits two key phenomena: a distributed recruitment process and an unbounded individual state space; both of these give rise to different challenges in the spectral analysis of the problem. Nevertheless, we characterised the asymptotic behaviour of solutions using techniques form the theory of strongly continuous positive semigroups. In particular we established the existence of a finite dimensional global attractor.

Next we are going to use our model \eqref{Model} in the context of specific applications, and therefore naturally we are going to introduce different types of nonlinearities in model \eqref{Model}. The results on the linear semigroup we presented here then will allow us to study qualitative questions also of the nonlinear model, such as existence of positive steady states, utilising also the framework recently developed in \cite{CF}.


\end{document}